\documentclass[10pt]{amsart}
\usepackage{hyperref}
\usepackage{verbatim,url}
\usepackage{amsmath}
\usepackage{enumerate,supertabular}
\usepackage{amssymb}
\usepackage{color}
\usepackage[active]{srcltx}

\numberwithin{equation}{section}

\usepackage[all]{xy}
\usepackage[active]{srcltx}

\theoremstyle{definition}

\newcommand{\Q}{\mathbb{Q}}

\setbox0=\hbox{$+$}
\newdimen\plusheight
\plusheight=\ht 0
\newcommand\+{\;\lower\plusheight\hbox{$+$}\;}

\setbox0=\hbox{$-$}
\newdimen\minusheight
\minusheight=\ht0
\renewcommand\-{\;\lower\minusheight\hbox{$-$}\;}

\setbox0=\hbox{$\cdots$}
\newdimen\cdotsheight
\cdotsheight=\plusheight
\newcommand\cds{\lower\cdotsheight\hbox{$\cdots$}}

\newcommand{\Z}{\mathbb{Z}}

\newtheorem{theorem}{Theorem}

\newtheorem{proposition}[theorem]{Proposition}

\newtheorem{remark}[theorem]{Remark}

\begin{document}

\title{Constructing Class invariants}
\author{  Aristides Kontogeorgis}


\address{
University of Athens, Panepistimioupolis 15784, Athens, Greece} \email{kontogar@math.uoa.gr}

\thanks{This work was supported by the Project 
``{\em Thalis, Algebraic modeling of topological and Computational structures}''.
The Project ``THALIS'' is implemented under the Operational Project
 ``Education and Life Long Learning is co-funded by the European Union
 (European Social Fund) and National Resources (ESPA)}

\begin{abstract}
Shimura reciprocity law allows us to verify that a modular function is a 
class invariant. Here we present a new method based on Shimura 
reciprocity that allows us not only to verify but 
to find new class invariants 
from a modular function 
of level $N$. 
\end{abstract}

\maketitle

\section{Introduction}

It is  well known that the ring class field of imaginary quadratic orders can be generated by evaluating the 
$j$-invariant at certain algebraic integers. There are many modular functions that can be used 
for the generation of the ring class field. In a series of articles \cite{GeeBordeaux},\cite{GeeStevenhagen},
\cite{GeePhD},\cite{steven2}
A. Gee and P. Stevenhagen developed a method based on Shimura reciprocity law, in order to 
check whether a modular function gives rise to a class invariant. 
A necessary condition for this is  the  invariance of the modular function under the action of 
the group $G_N=(\mathcal{O}/N\mathcal{O})^*/\mathcal{O}^*$, where $\mathcal{O}$ is an order of a 
quadratic imaginary field.

So far it seems that all known class invariants were found out of luck or by extremely  ingenious
people like 
Ramanujan. Aim of this article is to provide a systematic method for finding class invariants. 
We will use a combination of techniques from classical invariant theory \cite{invar} and 
 Galois descent \cite{Conrad}.
 
The structure of this article is as follows:
In section \ref{2} we give a very quick description of the technique  based on Shimura 
reciprocity law for checking whether a modular function is a class invariant. The interested 
reader should consider the more detailed explanations found in 
\cite{GeeBordeaux},\cite{GeeStevenhagen},\cite{GeePhD},\cite{steven2}. In section \ref{3} we explain our main observation. 
The action 
of $G_N$ is given in terms of matrices but the function $\rho$ sending elements of the group 
$G_N$ to matrices is not a linear representation but a cocycle. Then we break the computation 
into two parts. The first part considers a subgroup $H$ of $G_N$ such that $\rho$ when 
restricted to $H$ is a linear representation. Classical invariant theory provides us with a set 
of $H$-invariant elements. The second part makes the observation that the quotient
$G_N/H$ is isomorphic to the Galois group $\mathrm{Gal}(\Q(\zeta_N)/\Q)$, 
where $\zeta_N$ is a primitive $N$-th root of unity. Then 
Hilbert 90 ensures us that we can find a set of $G_N$ invariants. 
In section \ref{4} we use our technique  in the case of   generalized Weber functions.
We selected these modular functions since a lot of work has been done on them 
and also the action of $\mathrm{SL}(2,\Z)$ on them is well understood. 
For given $N$ prime number and discriminant $D$  we are able to construct a whole 
$\Q$-vector space consisted
of class invariants. A lot of examples are given and the magma code \cite{magma}
used to compute them is freely available upon request.  

\section{Shimura reciprocity law} \label{2}
Let $\Gamma(N)$ be the kernel of the map 
$\mathrm{SL}(2,\Z) \mapsto \mathrm{SL}\left(2,{\Z}/{N \Z}\right)$. 
The group $\mathrm{SL}(2,\Z)$ acts on the upper half plane $\mathbb{H}$  in terms
of linear fractional transformations and is known to be generated by the 
elements $S:z \mapsto -\frac{1}{z}$ and $T:z\mapsto z+1$. 

It is known that the quotient Riemann surface $\Gamma(N) \backslash \mathbb{H}^*$ can be defined over the field $\mathbb{Q}(\zeta_N)$, where $\zeta_N$ is a 
primitive $N$-th root of unity. We consider the function field 
$F_N$ of the corresponding curve defined over $\mathbb{Q}(\zeta_N)$. The function field 
$F_N$ is acted on 
by 
\[
\Gamma(N)/\{\pm 1\} \cong \mathrm{Gal}(F_N/F_1(\zeta_N)),  
\]
and there is also an arithmetic action of 
\[
 \mathrm{Gal}(F_1(\zeta_N)/F_1) \cong \mathrm{Gal}(\mathbb{Q}(\zeta_N)/\mathbb{Q})
\cong  \left( \frac{\Z}{N\Z} \right)^*.
\]
For an element $d\in \left( \frac{\Z}{N\Z} \right)^*$ we consider the 
automorphism $\sigma_d:\zeta_N \mapsto \zeta_N^d$. 
Since the Fourier coefficients of a function 
$h\in F_N$  are known to be in  $\mathbb{Q}(\zeta_N)$
we consider the action of 
$\sigma_d$ 
 on $F_N$ by applying $\sigma_d$ on the Fourier coeficients of $h$.
We have an action 
of the group 
$\mathrm{GL}\left( 2,\frac{\Z}{N\Z} \right)$ on $F_N$ that fits in the following 
short exact sequence.
\[
 1 \rightarrow \mathrm{SL}\left(2,\frac{\Z}{N \Z}\right) \rightarrow
\mathrm{GL}\left( 2,\frac{\Z}{N\Z} \right) 
\stackrel{\det}{\longrightarrow} \left( \frac{\Z}{N\Z} \right)^*
\rightarrow 1. 
\]
A. Gee and P. Stevehagen  \cite{GeeBordeaux},\cite{GeeStevenhagen},
\cite{GeePhD},\cite{steven2} proved the following theorem 
which was based on the work of Shimura \cite{Shimura}:
\begin{theorem} \label{units2mat}
 Let $\mathcal{O}=\Z[\theta]$ be the ring of integers of an imaginary quadratic number field 
$K$ of discriminant $d< -4$. Suppose that a  modular function $h \in F_N$ does not have a pole 
at $\theta$ and $\mathbb{Q}(j) \subset \mathbb{Q}(h)$. 
Suppose that the minimum polynomial of $\theta$ over $\mathbb{Q}$ is 
$x^2+Bx+C$. Then there is a subgroup 
$W_{N,\theta}\subset \mathrm{GL}\left( 2,\frac{\Z}{N\Z} \right)$ 
with elements of the form:
\[
 W_{N,\theta}=\left\{
\begin{pmatrix}
 t-Bs & -Cs \\
s & t
\end{pmatrix} \in
\mathrm{GL}\left( 2,\frac{\Z}{N\Z} \right): t\theta +s \in 
\left(\mathcal{O}/N\mathcal{O}\right)^* 
\right\}.
\]
The function value $h(\theta)$ is a
class invariant if and only if the group  
$W_{N,\theta}$ acts trivially on $h$.  
\end{theorem}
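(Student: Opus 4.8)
The plan is to unpack the statement "$h(\theta)$ is a class invariant" into the precise Galois-theoretic assertion that $h(\theta)$ lies in the ring class field $H_{\mathcal{O}}$ of the order $\mathcal{O}$, and then to show that the stabilizer of $h$ under $W_{N,\theta}$ governs exactly this containment. This is fundamentally a reformulation result resting on Shimura reciprocity, so my first task would be to recall the main reciprocity statement: for $h\in F_N$ with rational $q$-expansion coefficients up to $\zeta_N$, and for $\theta$ generating $\mathcal{O}$, the Galois action of $\mathrm{Gal}(H_{\mathcal{O},N}/K)$ on the value $h(\theta)$ is computed through the idelic-to-matrix correspondence. Concretely, an idele $x\in K_{\mathbb{A}}^*$ acts on $h(\theta)$ via the image $g_\theta(x)\in \mathrm{GL}(2,\widehat{\Z})$ obtained by writing $x$ in the $\Z$-basis $\{\theta,1\}$ of $\mathcal{O}$, and one has the compatibility $h(\theta)^{[x,K]} = h^{g_\theta(x)^{-1}}(\theta)$, where $[x,K]$ is the Artin symbol.

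First I would make explicit the matrix $g_\theta$. Writing an element $\alpha = t\theta + s \in (\mathcal{O}/N\mathcal{O})^*$ and using $\theta^2 = -B\theta - C$ (from the minimal polynomial $x^2+Bx+C$), multiplication by $\alpha$ on the basis $\{\theta,1\}$ is represented precisely by the matrix
\[
\begin{pmatrix} t - Bs & -Cs \\ s & t \end{pmatrix},
\]
which is exactly the shape of the elements of $W_{N,\theta}$. This identifies $W_{N,\theta}$ with the image in $\mathrm{GL}(2,\Z/N\Z)$ of the local units $(\mathcal{O}\otimes\Z_p)^*$ assembled over $p\mid N$, i.e. with the group through which the Galois action of $\mathrm{Gal}(H_{\mathcal{O},N}/H_{\mathcal{O}})$ (the part fixing the ring class field) is realized on $F_N$-values. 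Second, I would invoke the surjectivity: as $x$ ranges over the relevant ideles fixing $K(j(\theta))$, the matrices $g_\theta(x)$ range over exactly $W_{N,\theta}$, so that the field generated by $h(\theta)$ over $H_{\mathcal{O}}$ corresponds under the Galois action to the fixed subgroup of $W_{N,\theta}$ acting on $h$.

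The decisive step is then a biconditional. The value $h(\theta)$ lies in the ring class field $H_{\mathcal{O}}$ precisely when it is fixed by every automorphism of $\mathrm{Gal}(H_{\mathcal{O},N}/H_{\mathcal{O}})$; by the reciprocity translation each such automorphism sends $h(\theta)$ to $h^{g}(\theta)$ for some $g\in W_{N,\theta}$, and conversely every $g\in W_{N,\theta}$ arises this way. Hence $h(\theta)\in H_{\mathcal{O}}$ if and only if $h^{g}(\theta)=h(\theta)$ for all $g\in W_{N,\theta}$. I would then observe that since $\mathbb{Q}(j)\subset\mathbb{Q}(h)$ the value $h(\theta)$ already generates $H_{\mathcal{O}}$ over $K$ once it is invariant, so membership in $H_{\mathcal{O}}$ is equivalent to being a class invariant in the usual sense. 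The strongest conclusion, that $h^g = h$ as functions (not merely at the point $\theta$) suffices, follows because the action of $g\in W_{N,\theta}$ on $F_N$ commutes with evaluation, so trivial action on $h$ forces invariance of the value.

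The main obstacle, which I would handle carefully rather than gloss over, is the passage from the idelic formulation of Shimura reciprocity to the finite-level matrix action on $q$-expansions: one must verify that the non-standard action of $\sigma_d$ on Fourier coefficients (raising $\zeta_N$ to the $d$-th power) is correctly bundled with the $\mathrm{SL}(2,\Z/N\Z)$ action into a single $\mathrm{GL}(2,\Z/N\Z)$ action, and that the determinant of $g_\theta(x)$ matches the norm character controlling the cyclotomic part. The hypothesis $d<-4$ is precisely what guarantees $\mathcal{O}^*=\{\pm1\}$, so that the matrices are well-defined up to the harmless sign and $W_{N,\theta}$ is the genuine image of $(\mathcal{O}/N\mathcal{O})^*/\mathcal{O}^*$; I would flag this as the reason the theorem is stated for $d<-4$ rather than in general.
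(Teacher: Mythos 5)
The paper does not actually prove this theorem: its ``proof'' is the single citation \cite[cor.~4]{GeeBordeaux}, so there is no internal argument to compare yours against line by line. What you have written is, in outline, the standard proof of the cited result of Gee--Stevenhagen: Shimura's reciprocity law in its idelic form, the identification of $W_{N,\theta}$ with the image of $(\mathcal{O}/N\mathcal{O})^*$ under the map sending $t\theta+s$ to the matrix of multiplication by it on the basis $\{\theta,1\}$ (your computation with $\theta^2=-B\theta-C$ reproducing exactly the displayed matrix shape is the right check), the surjectivity of the Artin map onto $\mathrm{Gal}(H_{\mathcal{O},N}/H_{\mathcal{O}})$, and the role of $d<-4$ in making $\mathcal{O}^*=\{\pm 1\}$. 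Your remark that the delicate point is bundling the $\sigma_d$-action on Fourier coefficients with the $\mathrm{SL}(2,\Z/N\Z)$-action into a single $\mathrm{GL}(2,\Z/N\Z)$-action, with $\det$ matching the cyclotomic character, is exactly where the real work in Gee's paper lies. So the proposal is a faithful reconstruction of the intended (outsourced) argument rather than a genuinely different route.

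One point deserves more care than you give it. Shimura reciprocity yields the \emph{pointwise} equivalence: $h(\theta)\in H_{\mathcal{O}}$ if and only if $h^{g}(\theta)=h(\theta)$ for every $g\in W_{N,\theta}$. The theorem as stated asks that $W_{N,\theta}$ act trivially on $h$ \emph{as a function}, which is a priori stronger; the ``if'' direction is immediate (as you say, the action commutes with evaluation), but the ``only if'' direction requires either interpreting ``acts trivially on $h$'' as the pointwise condition, or an extra argument, since distinct functions in $F_N$ can agree at the single CM point $\theta$. Your sentence asserting that the fixed field of $h(\theta)$ ``corresponds to the fixed subgroup of $W_{N,\theta}$ acting on $h$'' silently conflates these two notions. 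This is a known looseness inherited from the statement in the literature rather than an error you introduced, but in a written proof you should state explicitly which version of the invariance condition you are proving equivalent to $h(\theta)$ being a class invariant.
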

\begin{proof}
\cite[cor. 4]{GeeBordeaux}.
\end{proof}
The above  theorem can be applied in order to show that a modular function 
gives rise to a class invariant and was used with success in order to 
prove that several functions were indeed class invariants. 
Also A. Gee and P. Stevenhagen provided us with an 
explicit way of describing the Galois action of $\mathrm{Cl}(\mathcal{O})$
on the class invariant so that we can construct the minimal polynomial.

We will now describe an algorithm that will allow us to find class invariants. 
As a result we will obtain a whole $\Q$-vector space of  class invariants.

Let $f$ be a modular function of level $N$. Assume that we can find a finite 
dimensional vector space $V$ consisted of modular functions of level $N$ so that 
$\mathrm{GL}(2,\Z/N\Z)$ acts on $V$. We can always find such a 
vector space. We simple have to consider the orbit of $f$ under the 
action of the finite group $\mathrm{GL}(2,\Z/N\Z)$. Notice that 
every element $a\in \mathrm{GL}(2,\Z/N\Z)$ can be written 
as $b \cdot \begin{pmatrix}
             1 & 0 \\ 0 &d
            \end{pmatrix}$, $d \in \mathbb{Z}/N\Z^*$ and 
$b \in \mathrm{SL}(2,\Z/N\Z)$. 
The group $\mathrm{SL}(2,\Z/N\Z)$ is generated by the elements 
$S=\begin{pmatrix}
    0 & 1 \\ -1 & 0
   \end{pmatrix}$ 
and $T=\begin{pmatrix} 1 & 1 \\ 0 & 1 \end{pmatrix}$. 
The action of $S$ on functions  $g \in V$ is 
defined to be $g \circ S=g(-1/z) \in V$ and the 
action of $T$ is defined $ g \circ T= g(z+1) \in V$. 

Here a technical difficulty arises: 
how can one compute efficiently the decomposition of an element in $\mathrm{SL}(2,\Z/N\Z)$ 
as a product of the generators $S$,$T$?
Observe that by Chinese remainder theorem we can write 
\[
 \mathrm{GL}(2,\Z/N\Z)= \prod_{p \mid N } \mathrm{GL}(2,\Z/ p^{v_p(N)}\Z), 
\]
where $v_p(N)$ denotes the power of $p$ that appears in the decomposition in 
prime factors. Working with the general linear group over a field has advantages 
and one can use lemma 6 in \cite{GeeBordeaux} in order to express an 
element of determinant one in $\mathrm{SL}(2,\Z/ p^{v_p(N)}\Z)$ as 
word in elements $S_p,T_p$ where 
$S_p$ and $T_p$ are $2\times 2$ matrices which  reduce to $S$ and $T$ modulo $p^{v_p(N)}$
and to the identity modulo $q^{v_q(N)}$ for prime divisors $q$ of $N$, $p\neq q$.

This way the problem is reduced to the problem of finding the matrices $S_p,T_p$
(this is easy using the Chinese remainder Theorem), and expressing them 
as products of $S,T$. For example a matrix $S_7$ in $\mathrm{GL}(2,\Z/24\cdot 7 \Z)$ 
that reduces to $S$ modulo $7$ but to the identity modulo $24$ can be 
easily computed, $S_7=\begin{pmatrix} 49 & 48 \\ 120 & 49 \end{pmatrix}$.
This matrix has determinant $-3359 \equiv 1 \mod 24\cdot 7$. 
In order to decompose such a matrix as a product of $S,T$ elements 
we observe that left  multiplication by $S$ interchanges the rows of 
a $2\times2$ matrix and also multiplies the first row by $-1$ while 
left multiplication by $T^k$ adds the second row multiplied by $k$ to the 
first. So by successive divisions and interchanges we can arrive at an upper triangular 
matrix of the form $\begin{pmatrix} \pm 1 &   a \\ 0 & \pm 1 \end{pmatrix}$.
Then we can multiply by $S^2 =-\mathrm{Id}$ if necessary in order to arrive at
a matrix of the form $T^a$. This algorithm was explained to me by V. Metaftsis. 
For the cases $N=24\cdot 5$ and $N=24\cdot 7$ using 
magma \cite{magma} we were able to compute that 
{\tiny
\begin{eqnarray*}
T_3 &=& T^{-80},\\
T_8 &=&  T^{-15},\\
T_5 &=& T^{-24}, \\
S_3 &=& S \cdot T^{-10} \cdot S \cdot T^{18} \cdot S^{-1} \cdot T^{10} \cdot S^{-1} \cdot T^{-18} \cdot S \cdot 
T^{-10} \cdot S \cdot T^{-10} \cdot S \cdot T^{-21}  \cdot \\  & & \cdot S^{-1} \cdot T^9 \cdot S^{-1} \cdot T^{77} \cdot S \cdot T^5 \cdot S
\cdot T^2 \cdot 
S \cdot T^5 \cdot S,\\
S_8 &=& S^{-1} \cdot T^{-10} \cdot S \cdot T^{-10} \cdot S \cdot T^{-21} \cdot S^{-1} \cdot T^9 \cdot S^{-1} \cdot T^{59} \cdot 
S \cdot T^3 \cdot S \cdot T^{-4} \cdot S^{-1} \cdot  \\ && \cdot T^9 \cdot S^{-1} \cdot T^{-6} \cdot S \cdot T^8 \cdot S \cdot T^2 \cdot S \\
S_5 &=& S^{-1} 
\cdot T^{11} \cdot S \cdot T^{11} \cdot S^{-1} \cdot T^{11} \cdot S \cdot T^{-10} \cdot S \cdot T^{18} \cdot S^{-1} 
\cdot T^{10} \cdot S^{-1} \cdot 
T^{-18}  \cdot \\  & & \cdot 
 S \cdot T^{-10} \cdot S \cdot T^{-10} \cdot S \cdot T^{-21} \cdot S^{-1} \cdot T^9 \cdot S^{-1} \cdot T^{64}
\cdot S \cdot T^5 
\cdot S \cdot T^5 \cdot S  \\
\end{eqnarray*}
}
and
{\tiny 
\begin{eqnarray*}
T_3 &= & T^{-56},\\
T_8 &= & T^{-63},\\
T_7 & =& T^{-48}, \\
S_3 &= & S^{-1} \cdot T^{41} \cdot S \cdot T^{41} \cdot S^{-1} \cdot 
T^{101} \cdot S \cdot T^4 \cdot S \cdot T^4 \cdot S \cdot T^4 \cdot S\\
S_8 & =& S^{-1} \cdot T^{41} \cdot S \cdot T^{41} \cdot S^{-1} \cdot T^{41}
\cdot S \cdot T^{11} \cdot S^2 \cdot T^{-8} \cdot S \cdot T^{-40} \cdot  \\  && \cdot S^{-1}
\cdot T^{40} \cdot S^{-1} \cdot T^{19} \cdot S^{-1} \cdot T^{-37} 
\cdot S \cdot T^3 \cdot S \cdot T^3 \cdot S \cdot T^3 \cdot S \\
S_7 & =& 
S^{-1} \cdot T^{41} \cdot S \cdot T^{41} \cdot S^{-1} \cdot T^{41} 
\cdot S \cdot T^{-8} \cdot S \cdot T^{-40} \cdot S^{-1} \cdot T^{40} \cdot 
S^{-1} \cdot T^8 \cdot S \cdot \\  && \cdot T^{-8} \cdot  S \cdot T^{-40} \cdot S^{-1}
\cdot T^{40} \cdot  S^{-1} \cdot T^{22} \cdot S \cdot T^4 \cdot S \cdot 
T^2 \cdot S \cdot T^4 \cdot S, 
\end{eqnarray*}
}
respectivelly.


The action of the matrix $\begin{pmatrix}
             1 & 0 \\ 0 &d
            \end{pmatrix}$
is given by the action of the elements $\sigma_d \in \mathrm{Gal}(\Q(\zeta_N)/\Q)$
on the Fourier coefficients of the expansion at the cusp at infinity \cite{GeeBordeaux}. 
Since every element in $\mathrm{SL}(2,\Z/N\Z)$ can be written 
as a word in $S,T$ we obtain a function $\rho$ 
\begin{equation} \label{rhodef}
\xymatrix{
\left(\frac{\mathcal{O}}{N \mathcal{O}}\right)^*  \ar^\rho@/^1.5pc/[rr] \ar^\phi[r] & \mathrm{GL}(2,\Z/N\Z)  \ar[r] 
& \mathrm{GL}(V),
}
\end{equation}
where $\phi$ is the natural homomorphism given by theorem \ref{units2mat}.
\section{Finding class invariants} \label{3}
The map $\rho$ defined in eq. (\ref{rhodef}) in previous section is not a homomorphism. 
Indeed, if $e_1,\ldots,e_m$ is a basis of $V$, then the action of $\sigma$ is given in 
matrix notation as 
\[
 e_i \circ \sigma=\sum_{\nu=1}^m \rho(\sigma)_{\nu,i} e_\nu,
\]
and then since $(e_i \circ \sigma) \circ \tau= e_i \circ (\sigma\tau)$ we
obtain
\[
 e_i \circ (\sigma \tau) =\sum_{\nu,\mu=1}^m \rho(\sigma)_{\nu,i}^\tau
 \rho(\tau)_{\mu,\nu } e_\mu.
\]
Notice that the elements $\rho(\sigma)_{\nu,i} \in \Q(\zeta_N)$ and 
$\tau \in \mathrm{GL}(2,\Z/N\Z)$ acts on them as  well by the element
$\sigma_{\det(\tau)} \in \mathrm{Gal}(\Q(\zeta_N)/\Q)$.
So we arrive  at the following: 
\begin{proposition}
 The map $\rho$ defined in eq. (\ref{rhodef}) satisfies the  cocycle
condition
\begin{equation} \label{cocyc}
 \rho(\sigma\tau)=\rho(\tau)\rho(\sigma)^\tau 
\end{equation}
 and gives rise to a class in  
$H^1(G,\mathrm{GL}(V))$, where $G=(\mathcal{O}/N\mathcal{O})^*$.
The restriction of the map $\rho$ in the subgroup $H$ of $G$ defined by 
\[
 H:=\{x \in G :  \det(\phi(x))=1  \}
\]
is a homomorphism.
\end{proposition}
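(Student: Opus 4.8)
The plan is to read the cocycle relation straight out of the associativity identity $(e_i\circ\sigma)\circ\tau=e_i\circ(\sigma\tau)$ already displayed, then to recognise the resulting matrix identity as a non-abelian $1$-cocycle, and finally to observe that on $H$ the Galois twist simply disappears.

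First I would make the coefficient action explicit. Each matrix $\rho(\sigma)$ has entries $\rho(\sigma)_{\nu,i}\in\Q(\zeta_N)$, and an element $\tau\in G$ acts on these entries through $\sigma_{\det(\phi(\tau))}\in\mathrm{Gal}(\Q(\zeta_N)/\Q)$; write $\rho(\sigma)^\tau$ for the matrix obtained by applying this automorphism entrywise. I would then expand $e_i\circ(\sigma\tau)$ in two ways. Directly, $e_i\circ(\sigma\tau)=\sum_\mu\rho(\sigma\tau)_{\mu,i}e_\mu$. Via associativity, using that composing with $\tau$ applies $\sigma_{\det(\phi(\tau))}$ to the scalar coefficients,
\[
(e_i\circ\sigma)\circ\tau=\sum_\nu\rho(\sigma)_{\nu,i}^\tau(e_\nu\circ\tau)=\sum_{\nu,\mu}\rho(\sigma)_{\nu,i}^\tau\rho(\tau)_{\mu,\nu}e_\mu.
\]
Matching the coefficients of $e_\mu$ gives $\rho(\sigma\tau)_{\mu,i}=\sum_\nu\rho(\tau)_{\mu,\nu}\rho(\sigma)_{\nu,i}^\tau$, which is precisely the matrix identity $\rho(\sigma\tau)=\rho(\tau)\rho(\sigma)^\tau$ of \eqref{cocyc}.

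Next I would cast this as a cohomology class. The group $G=(\mathcal{O}/N\mathcal{O})^*$ acts on $\mathrm{GL}(V)\cong\mathrm{GL}_m(\Q(\zeta_N))$ by the entrywise Galois action $M\mapsto M^\tau$; this is a genuine group action because $\tau\mapsto\sigma_{\det(\phi(\tau))}$ is a homomorphism $G\to\mathrm{Gal}(\Q(\zeta_N)/\Q)$, being the composite of the homomorphism $\phi$ of Theorem \ref{units2mat}, the determinant, and the isomorphism $(\Z/N\Z)^*\cong\mathrm{Gal}(\Q(\zeta_N)/\Q)$. The relation \eqref{cocyc} is then exactly the non-abelian $1$-cocycle identity for this action (in the right-action normalisation), so $\rho$ represents a class in $H^1(G,\mathrm{GL}(V))$; here one only checks in addition that $\rho(1)=\mathrm{Id}$, which follows by putting $\sigma=\tau=1$ in \eqref{cocyc} and using invertibility of $\rho(1)$. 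For the restriction to $H=\{x\in G:\det(\phi(x))=1\}$ I would observe that the twist becomes trivial: for $\tau\in H$ we have $\det(\phi(\tau))=1$, hence $\sigma_{\det(\phi(\tau))}=\mathrm{id}$ and $\rho(\sigma)^\tau=\rho(\sigma)$ for every $\sigma$. Substituting into \eqref{cocyc} leaves the multiplicative relation $\rho(\sigma\tau)=\rho(\tau)\rho(\sigma)$ for all $\sigma,\tau\in H$, so $\rho|_H$ is an ordinary homomorphism into $\mathrm{GL}(V)$; equivalently, $\phi|_H$ lands in $\mathrm{SL}(2,\Z/N\Z)$, whose action on $V$ carries no arithmetic component and is therefore an honest representation.

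I expect the only real obstacle to be bookkeeping rather than depth. One must keep careful track of which argument the Galois automorphism acts on and of the order in which $\rho(\sigma)$ and $\rho(\tau)$ appear, since $g\mapsto g\circ\sigma$ is a \emph{right} action and composition reverses order; this is exactly what forces the twist onto the factor $\rho(\sigma)^\tau$ and fixes the product as $\rho(\tau)\rho(\sigma)^\tau$ rather than the reverse. Once these conventions are pinned down consistently, both the identification of \eqref{cocyc} as a cocycle and the collapse of the twist on $H$ are immediate.
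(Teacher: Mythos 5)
Your argument is correct and follows essentially the same route as the paper: expanding $e_i\circ(\sigma\tau)$ via associativity of the right action, noting that $\tau$ twists the coefficients $\rho(\sigma)_{\nu,i}$ through $\sigma_{\det(\phi(\tau))}$, and observing that the twist vanishes on $H$ so that $\rho|_H$ is a genuine homomorphism. The additional checks you supply (that $\tau\mapsto\sigma_{\det(\phi(\tau))}$ is a homomorphism and that $\rho(1)=\mathrm{Id}$) are harmless elaborations of what the paper leaves implicit.
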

The basis elements $e_1,\ldots e_m$ are modular functions so there 
is a natural notion of multiplication for them.  We will   consider the 
 polynomial algebra  $\Q(\zeta_N)[e_1,\ldots, e_m]$. The group 
$H$ acts  on this algebra  in terms of the linear representation $\rho$
(recall that $\rho$ when restricted to $H$ is a homomorphism).

 Classical invariant 
theory provides us with effective methods 
(Reynolds operator method,linear algebra method \cite{KS97})  in order to compute the 
ring of invariants
$\Q(\zeta_N)[e_1,\ldots, e_m]^H$.
Also there  is a well defined action of the quotient group 
$G/H \cong \mathrm{Gal}(\Q(\zeta_N)/\Q)$ on $ \Q(\zeta_N)[e_1,\ldots, e_m]^H$.

Select the vector space $V_n$ of invariant polynomials of given degree $n$. 
\begin{remark}
For the applications in elliptic curves construction or in effective 
generation of the Hilbert class field
we have to take the smallest degree $n$ such that 
$V_n \neq \{0\}$. 
Indeed, it is known that there is a polynomial relation $F(f,j)$ among the functions $j,f$, 
where $j$ is the $j$-invariant, since the function field $F_N$
has transcendence degree $1$. It is known that this polynomial relation 
controls asymptotically the logarithmic  height $H(P_f)$, $H(P_j)$  of the minimal polynomial of $f$ and $j$
in the following way:
\[
 \lim_{h(j(\tau)\rightarrow \infty}
\frac{H(P_j)}{H(P_f)}=\frac{\deg_f F(f,j)}{\deg_j F(f,j)}=:r(f)
\]
where the limit is taken over all CM-points $\mathrm{SL}(2,\Z)\tau \in \mathbb{H}$ \cite{HinSil}.
So the best result comes when the $\deg_f F(f,j)$ is as big as possible. 
\end{remark}

The action of $G/H$ on $V_n$ gives rise to a cocycle
\[
 \rho' \in H^1(\mathrm{Gal}(\Q(\zeta_N))/\Q),V_n).
\]
The multidimensional Hilbert 90 theorem  asserts  that there is an element 
$P \in \mathrm{GL}(V_n)$ such that 
\begin{equation} \label{cobo}
 \rho' (\sigma) =  P^{-1} P^\sigma.
\end{equation}
Let $v_1,\ldots,v_\ell$ be a basis of $V_n$. 
The elements $v_i$ are by construction $H$ invariant. 
The elements $w_i:=v_i P^{-1}$ are $G/H$ invariant since
\[
 (v_i P^{-1}) \circ \sigma=( v_i \circ \sigma) (P^{-1})^{\sigma}=
v_i \rho(\sigma) (P^{-1})^\sigma=v_i P^{-1} P^{\sigma}  (P^{-1})^\sigma=
v_i P^{-1}.
\]
The above computation together with theorem \ref{units2mat} allows us to prove 
\begin{proposition}
Consider the polynomial ring  $\Q(\zeta_N)[e_1,\ldots,e_m]$ and the 
vector space  $V_n$  of  $H$-invariant homogenous polynomials of degree $n$. 
If  $P$ is a matrix such that eq. 
(\ref{cobo}) holds, then the images of a basis of $V_n$ under the action of $P^{-1 }$
are class invariants.         
\end{proposition}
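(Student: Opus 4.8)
The plan is to prove that each function $w_i=v_iP^{-1}$ is invariant under the whole group $G=(\mathcal{O}/N\mathcal{O})^*$ acting through $\rho$, and then to read off the conclusion from Theorem~\ref{units2mat}. First I would record the two invariance properties separately. Since $P\in\mathrm{GL}(V_n)$ maps the space $V_n$ of $H$-invariant homogeneous polynomials of degree $n$ onto itself, the images $w_i=v_iP^{-1}$ again lie in $V_n$; hence they are automatically $H$-invariant, $w_i\circ h=w_i$ for all $h\in H$. Secondly, the displayed computation immediately preceding the statement already establishes, via the coboundary relation $\rho'(\sigma)=P^{-1}P^\sigma$ of eq.~(\ref{cobo}), that $w_i\circ\sigma=w_i$ for $\sigma$ ranging over $G/H\cong\mathrm{Gal}(\Q(\zeta_N)/\Q)$.

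The heart of the argument is to combine these into genuine invariance under all of $G$. For this I would use that $H=\ker(\det\circ\phi)$ is normal in $G$, so that the quotient action of $G/H$ on $V_n$ is well defined and preserves $V_n$, and that precomposition by $g$ defines a genuine associative right action of $G$ on the modular functions: the cocycle shape of $\rho$ in eq.~(\ref{cocyc}) merely records the Galois twisting of the matrix entries in $\Q(\zeta_N)$ and does not spoil associativity, which is exactly the relation $(e_i\circ\sigma)\circ\tau=e_i\circ(\sigma\tau)$ from which (\ref{cocyc}) was derived. Writing an arbitrary $g\in G$ as $g=sh$ with $h\in H$ and $s$ a representative of a coset in $G/H$, associativity then gives $w_i\circ g=(w_i\circ s)\circ h=w_i\circ h=w_i$, so $w_i$ is fixed by every element of $G$.

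It remains to translate this into the hypothesis of Theorem~\ref{units2mat}. The action of $G$ on functions factors through $\phi$, and the image $\phi(G)$ is precisely the group $W_{N,\theta}\subset\mathrm{GL}(2,\Z/N\Z)$ appearing in that theorem; consequently the $G$-invariance just obtained says exactly that $W_{N,\theta}$ acts trivially on each $w_i$. Theorem~\ref{units2mat} then guarantees that each value $w_i(\theta)$ is a class invariant, which is the desired statement for the images of a basis of $V_n$ under $P^{-1}$. I expect the main obstacle to be this middle synthesis step rather than any single computation: one must take care that the $G/H$-action on $V_n$ is well defined (which is where normality of $H$ enters) and that the two bookkeeping conventions stay consistent, since the invariance assertions concern the true functions $w_i$ whereas the matrices $\rho(\sigma)$ only represent the action after their entries are twisted by $\sigma_{\det}$.
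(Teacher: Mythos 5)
Your proposal is correct and follows essentially the same route as the paper, whose entire proof consists of the displayed coboundary computation $(v_iP^{-1})\circ\sigma=v_iP^{-1}$ together with an appeal to Theorem~\ref{units2mat}. You merely make explicit the steps the paper leaves implicit --- that $P^{-1}$ preserves $V_n$ so the $w_i$ remain $H$-invariant, and that $H$-invariance plus $G/H$-invariance combine (via $g=sh$ and associativity of the right action) into triviality of the full $W_{N,\theta}$-action --- which is a faithful filling-in rather than a different argument.
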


How can we compute the matrix $P$ so that eq. (\ref{cobo}) holds?
We will use a version of  Glasby-Howlett probabilistic algorithm \cite{Glasby-Howlett}.
 We consider  the sum
\begin{equation} \label{BQ}
 B_Q:=\sum_{\sigma \in G/H} \rho(\sigma) Q^\sigma.
\end{equation}
If we manage to find a $2\times 2$ matrix in $\mathrm{GL}(2,\mathbb{Q}(\zeta_{N}))$
such that $B_Q$ is invertible then 
$P:=B_Q^{-1}$. 
Indeed, we compute that 
\begin{equation} \label{11}
 B_Q^{\tau} =\sum_{\sigma \in G/H} \rho(\sigma)^\tau  Q^{\sigma \tau}, 
\end{equation}
and  the cocycle condition $\rho(\sigma \tau)=\rho(\sigma)^\tau \rho(\tau)$, together with 
eq. (\ref{11}) allows us to write:
\[
 B_Q^{\tau}= \sum_{\sigma \in G/H} \rho(\sigma\tau) \rho(\tau)^{-1}  Q^{\sigma \tau}=
B_{Q} \rho_{\tau}^{-1}
\]
i.e.
\[
 \rho(\tau)= B_Q  \left(B_Q^{\tau} \right)^{-1}. 
\]
In order to obtain an invertible element $B_Q$ we feed eq. (\ref{11})
with random matrices $Q$ until $B_Q$ is invertible. Since non invertible 
matrices are rare (they form a Zariski closed subset in the space of matrices)
our first random choice of $Q$ always worked!

\section{Examples} \label{4}
Consider the generalised Weber functions 
$\frak{g}_0,\frak{g}_1,\frak{g}_2,\frak{g}_3$
 defined in the 
 work of A. Gee in \cite[p. 73]{GeePhD}
as 
\[
 \frak{g_0}(\tau)=\frac{\eta(\frac{\tau}{3})}{\eta(\tau)},\;
 \frak{g_1}(\tau)=\zeta_{24}^{-1}\frac{\eta(\frac{\tau+1}{3})}{\eta(\tau)},\;
\frak{g_2}(\tau)=\frac{\eta(\frac{\tau+2}{3})}{\eta(\tau)},\;
\frak{g_3}(\tau)=\sqrt{3}\frac{\eta(3\tau)}{\eta(\tau)},
\]
where $\eta$ denotes the Dedekind eta function:
\[
 \eta(\tau)=e^{2\pi i \tau/24} \prod_{n\geq 1}(1-q^n)\;\; \tau \in \mathbb{H}, q=e^{2\pi i \tau}.
\]
These are modular functions of level $72$.
In our previous work \cite{KoKo3} we investigated the action of the group 
$W_{N,\theta}$ for the $n\equiv 19 \mod 24$ case on the 
these modular functions 
and we showed  that the group $G:=W_{72,\theta}$ induces an action of the 
generalized symmetric group $\mu(12) \rtimes S_4$ on them. 
Any element $g$  of $G$ induces a matrix action by expressing
$ \frak{g_i}^g$, $i=0,1,2,3$  as a linear combination of the functions 
$\frak{g}_0,\frak{g}_1,\frak{g}_2,\frak{g}_3$. 
This way we obtain a map 
\begin{equation} \label{rho-def}
 \rho: G \rightarrow \mathrm{GL}(4,\mathbb{Q}(\zeta_{72}))=
\mathrm{Aut} \left(
\langle \frak{g}_0,\frak{g}_1,\frak{g}_2,\frak{g}_3 \rangle_{\mathbb{Q}(\zeta_{72})}
\right).
\end{equation}
In order to overcome the cocycle problem we raised everything to the 12-th power. 
This way the corresponding action 
\[
 \rho_{12}:G \rightarrow \mathrm{GL}(4,\mathbb{Q}(\zeta_{72}))=
\mathrm{Aut} \left(
\langle \frak{g}_0^{12},\frak{g}_1^{12},\frak{g}_2^{12},\frak{g}_3^{12}
 \rangle_{\mathbb{Q}(\zeta_{72})}
\right).
\]
becomes a group representation and we were able to find invariants of 
the action, that lead to class invariants by just applying the methods of
classical invariant theory for linear actions. 
This approach  has  a disadvantage; the class invariants we produce give 
rise to class polynomials with large coefficients. 

We consider the subgroup $H$ of $G$ defined by 
$H:=G \cap \mathrm{SL}\left( 2, \frac{\Z}{72 \Z} \right)$. When we restrict the 
map $\rho$ of eq. (\ref{rho-def}) we obtain a linear action and then 
we can construct the invariant polynomials of this action. 
Notice that there are no invariant polynomials of degree $1$ for $H$. 
But we can find invariant polynomials of degree $2$.  
For example for $n=-571$ the group $H$ has order $144$ and 
$G$ has order $3456$.  We find that the polynomials 
\[
 I_1:=\frak{g}_0 \frak{g}_2 + \zeta_{72}^6\frak{g}_1\frak{g}_3, \qquad
 I_2:=\frak{g}_0\frak{g}_3 + (-\zeta_{72}^{18} + \zeta_{72}^6)\frak{g}_1\frak{g}_2
\]
are indeed invariants of the 
action of $H$. Then we consider the action of $G/H$, which is an abelian group of order $24$
isomorphic to the group $\mathrm{Gal}(\mathbb{Q}(\zeta_{72})/\mathbb{Q})$. 
The quotient map gives rise to an action of 
\[
 \bar{\rho}:G/H \rightarrow \mathrm{GL}(2,\mathbb{Q}(\zeta_{72})=
\mathrm{Aut}( \langle I_1,I_2 \rangle_{\mathbb{Q}(\zeta_{72})}.
\]
The map $\bar{\rho}$ is again a cocycle in 
\[H^1(G/H,\mathrm{GL}(2,\mathbb{Q}(\zeta_{72}))=
H^1(\mathrm{Gal}(\mathbb{Q}(\zeta_{72})/\mathbb{Q}),
\mathrm{GL}(2,\mathbb{Q}(\zeta_{72}))
=0\]
by the multidimensional Hilbert 90 theorem. 
Therefore there is an element $P \in \mathrm{GL}(2,\mathbb{Q}(\zeta_{72}))$
such that 
\[
 \bar{\rho}(\sigma)=P^{\sigma}P^{-1}.
\]
The 
 elements 
$(I_1,I_2)\cdot P=:(e_1,e_2)$  given by
\[
   e_1:=(-12\zeta_{72}^{18} + 12\zeta_{72}^6)\frak{g}_0\frak{g}_3 + 
12\zeta_{72}^6\frak{g}_0 \frak{g}_3 + 12 \frak{g_1} \frak{g}_2 + 12 \frak{g}_1\frak{g}_3,
\]
  \[
e_2:=  12 \zeta_{72}^6 \frak{g}_1\frak{g}_2 + (-12 \zeta_{72}^{18} + 
12 \zeta_{72}^6) \frak{g}_0 \frak{g}_3 + (-12\zeta_{72} ^{12} + 12) \frak{g}_1 \frak{g}_3
 + 
        12\zeta_{72}^{12}\frak{g}_1\frak{g_3} 
\]
generate a $\Q$-vector space of class invariants.

In  table \ref{tab3} we write down the Hilbert polynomial corresponding to the $j$-invariant
the invariant corresponding to $\frak{g}_0^{12}\frak{g}_1^{12}+
\frak{g}_2^{12}\frak{g}_3^{12}$ and the polynomials corresponding to $e_1$ and 
$e_2$. We also examine the polynomial $\frak{g}_0^{12}\frak{g}_1^{12}+
\frak{g}_2^{12}\frak{g}_3^{12}$ since it is one of the few class invariants known in the 
$D \equiv 5\mod 24$ case.

\begin{table} 
\caption{Minimal polynomials using the $\frak{g}_0,\ldots,\frak{g}_3$ functions.} \label{tab3}
{\tiny
\begin{tabular}{|l|l|}
\hline
Invariant & polynomial \\
\hline
Hilbert &
$t^5 + 400497845154831586723701480652800t^4 +$ \\
&
    $818520809154613065770038265334290448384t^3 + $\\ 
& $4398250752422094811238689419574422303726895104t^2$\\ 
 &  $- 16319730975176203906274913715913862844512542392320t$ \\ 
 & $+ 15283054453672803818066421650036653646232315192410112$ \\
\hline
 &
 $ t^5 - 5433338830617345268674t^4$ + 
   $ 90705913519542658324778088 t^3$ \\ 
$\frak{g}_0^{12}\frak{g}_1^{12}+
\frak{g}_2^{12}\frak{g}_3^{12}$ & $-3049357177530030535811751619728 t^2$ \\
&$ - 
390071826912221442431043741686448t$ \\ & - 
    $12509992052647780072147837007511456$ \\
\hline 
$e_1$ & $t^5 - 936t^4 - 
    60912t^3 - 
2426112t^2 - 
40310784t - 
    3386105856$\\
\hline
$e_2$ & 
$t^5 - 1512t^4 - 
    29808t^3 + 
979776t^2 + 3359232t - 
    423263232$\\
\hline
\end{tabular}
}
\end{table}

\subsection{Generalized Weber functions}
The Weber and $\frak{g}_i$ functions are special cases of the so called {\em generalized
Weber functions} defined as:
\[
 \nu_{N,0}:=\sqrt{N} \frac{\eta \circ \begin{pmatrix} N & 0 \\ 0 & 1 \end{pmatrix}}{\eta}
 \mbox{ and } 
 \nu_{k,N}:=\frac{\eta \circ \begin{pmatrix} 1 & k \\ 0 & N \end{pmatrix}}{\eta}, 0 \leq k \leq N-1.
\]
These are known to be modular functions of level $24N$ \cite[th5. p.76]{GeePhD}.
Notice that $\sqrt{N} \in \mathbb{Q}(\zeta_N)\subset \mathbb{Q}(\zeta_{24\cdot N})$
 and an explicit 
expression of $\sqrt{N}$ in terms of $\zeta_N$ can be given by using Gauss sums
\cite[3.14 p. 228]{Fro-Tay}.

The group $\mathrm{SL}(2,\Z)$ acts on the $(N+1)$-th dimensional vector space generated by them. 
In order to describe this action we have to describe the action of the two 
generators $S,T$ of $\mathrm{SL}(2,\Z)$ given by $S:z\mapsto -\frac{1}{z}$ and 
$T:z \mapsto z+1$. 
Keep in mind that 
\[
 \eta \circ T (z)=\zeta_{24} \eta(z)  \mbox{ and }
 \eta \circ S(z)= \zeta_8^{-1} \sqrt{i z} \eta(z).
\]
We compute that (see also \cite[p.77]{GeePhD})
\[
 \nu_{N,0}\circ S= \nu_{0,N} \mbox{ and }  \nu_{N,0} \circ T=\zeta_{24}^{N-1} \nu_{N,0}, 
\]
\[
 \nu_{0,N}\circ S=\nu_{N,0} \mbox{ and }  \nu_{0,N} \circ T=\zeta_{24}^{-1} \nu_{1,N},
\]
for  $1 \leq k < N-1$ and $N$ is prime
\[
 \nu_{k,N} \circ S=\left(\frac{-c}{n} \right) i^{\frac{1-n}{2}} \zeta_{24}^{N(k-c)}
  \mbox{ and } \nu_{k,N} \circ T =\zeta_{24}^{-1} \nu_{k+1,N},
\]
where $c=-k^{-1} \mod N$. 
The computation of the action of $S$ on $\eta$ is the most difficult,  see
 \cite[eq. 8 p.443]{Hart}.

Assume that $N=5$ and $D=-91$.
We compute that the group $H$ of determinant $1$ has invariants
\[
\nu_{5,0}+(\zeta^{25} - \zeta^5)\nu_{3,5} \mbox{ and }
\nu_{0,5} + (\zeta^{31} - \zeta^{23} - \zeta^{19} - \zeta^{15} + \zeta^7 + \zeta^3)
\nu_{1,5}.
\]
Using the our method we arrive at the final invariants:
{\tiny
\begin{eqnarray*}
I_1 &= &(-1224\zeta^{28} + 612\zeta^{20} + 2740\zeta^{16} + 1516\zeta^4 - 612)\nu_{5,0}\\
 & & + 
(4256\zeta^{28} - 
        2128\zeta^{20} - 1516\zeta^{16} + 2740\zeta^4 + 2128)\nu_{0,5} \\
 & & + 
(-1224\zeta^{31} - 2740\zeta^{27} 
          + 612\zeta^{15} + 1224\zeta^{11} + 1516\zeta^3)\nu_{1,5} \\  &&  + 
(1516\zeta^{29} - 612\zeta^{25} + 
        1224\zeta^{13} - 1516\zeta^9 - 2740\zeta)\nu_{3,5},
\end{eqnarray*}
\begin{eqnarray*}
  I_2 &= & (-1952\zeta^{28} + 976\zeta^{20} + 2128\zeta^{16} + 176\zeta^4 - 976)\nu_{5,0}\\
& &
 + (2304\zeta^{28} - 
        1152\zeta^{20} - 176\zeta^{16} + 2128\zeta^4 + 1152)\nu_{0,5} \\
 & & + 
(-1952\zeta^{31} - 2128\zeta^{27} +
        976\zeta^{15} + 1952\zeta^{11} + 176\zeta^3)\nu_{1,5} \\ & & + 
(176\zeta^{29} - 976\zeta^{25} + 
        1952\zeta^{13} - 176\zeta^9 - 2128\zeta)\nu{3,5}.
\end{eqnarray*}
}
The $\Q$-vector space generated by these two functions consists of class functions. 
We can now compute the corresponding  polynomials:
\[t^2 - 3060t - 
        28090800 \mbox{ and }
    t^2 - 4880t - 71443200.
\]
Just for comparison the Hilbert polynomial corresponding to the $j$ invariant is:
 \[t^2 + 10359073013760t - 3845689020776448. 
\]
For $N=7$ and $n=91$ we have computed the invariants for the 
group $H$ of elements of determinant $1$ and we present the results in table \ref{table2}.  
\begin{table} 
 \label{table2}
\caption{Invariants for the group of elements of determinant 1 for $N=7$ and $n=91$}
{\tiny 
\begin{eqnarray*}I_1 &= & {{ \nu_{N,0}}}^{2}-{{ \nu_{0,N}}}^{2}+ \left( -{\zeta}^{42}+{\zeta}^{14}
 \right) {{ \nu_{1,N}}}^{2}+ \left( {\zeta}^{28}-1 \right) {{ \nu_{2,N}}}^{2}-
{\zeta}^{42}{{ \nu_{3,N}}}^{2}-{\zeta}^{14}{{ \nu_{5,N}}}^{2}+{{ \nu_{6,N}}}^{2},\\
I_2 &= & {
 \nu_{N,0}}\,{ \nu_{0,N}}+{\zeta}^{35}{ \nu_{N,0}}\,{ \nu_{1,N}}-{\zeta}^{28}{ \nu_{0,N}}
\,{ \nu_{2,N}}+{\zeta}^{35}{ \nu_{1,N}}\,{ \nu_{6,N}}-{\zeta}^{35}{ \nu_{2,N}}\,{ 
\nu_{5,N}}+ \\ & & \left( {\zeta}^{42}-{\zeta}^{14} \right) { \nu_{3,N}}\,{ \nu_{5,N}}+{
\zeta}^{21}{ \nu_{3,N}}\,{ \nu_{6,N}},\\
 I_3 & = &
{ \nu_{N,0}}\,{ \nu_{2,N}}+ \left( {\zeta}^{28}-
1 \right) { \nu_{N,0}}\,{ \nu_{6,N}}+{\zeta}^{7}{ \nu_{0,N}}\,{ \nu_{1,N}}+ \left( -{
\zeta}^{35}+{\zeta}^{7} \right) { \nu_{0,N}}\,{ \nu_{5,N}}\\ & &+{\zeta}^{28}{ \nu_{1,N}
}\,{ \nu_{3,N}}+ \left( {\zeta}^{35}-{\zeta}^{7} \right) { \nu_{2,N}}\,{ \nu_{3,N}
}-{\zeta}^{21}{ \nu_{5,N}}\,{ \nu_{6,N}},
\\
I_4 &= &
{ \nu_{N,0}}\,{ \nu_{3,N}}+ \left( -{\zeta}^{
42}+{\zeta}^{14} \right) { \nu_{N,0}}\,{ \nu_{5,N}}+{\zeta}^{42}{ \nu_{0,N}}\,{
 \nu_{3,N}}+ \left( -{\zeta}^{35}+{\zeta}^{7} \right) { \nu_{0,N}}\,{ \nu_{6,N}}-{
\zeta}^{42}{ \nu_{1,N}}\,{ \nu_{2,N}}+ \\  & &\left( {\zeta}^{35}-{\zeta}^{7}
 \right) { \nu_{1,N}}\,{ \nu_{5,N}}+ \left( -{\zeta}^{45}+{\zeta}^{37}+{\zeta}
^{33}-{\zeta}^{25}+{\zeta}^{17}+{\zeta}^{13}-{\zeta}^{5}-\zeta
 \right) { \nu_{2,N}}\,{ \nu_{6,N}},
\\
I_5 &= & 
{ \nu_{N,0}}\,{ \nu_{4,N}}+{\zeta}^{42}{ \nu_{0,N}}\,{
 \nu_{4,N}}+ \left( -{\zeta}^{45}+{\zeta}^{37}+{\zeta}^{33}-{\zeta}^{25}-{
\zeta}^{21}+{\zeta}^{17}+{\zeta}^{13}-{\zeta}^{5}-\zeta \right) { 
\nu_{1,N}}\,{ \nu_{4,N}}- \\ & & {\zeta}^{28}{ \nu_{2,N}}\,{ \nu_{4,N}}+ \left( {\zeta}^{35}-{
\zeta}^{7} \right) { \nu_{3,N}}\,{ \nu_{4,N}}+
 \\ & & \left( -{\zeta}^{45}+{\zeta}^{
37}+{\zeta}^{33}-{\zeta}^{25}+{\zeta}^{17}+{\zeta}^{13}-{\zeta}^{5}-
\zeta \right) { \nu_{4,N}}\,{ \nu_{5,N}}+{ \nu_{4,N}}\,{ \nu_{6,N}},\\
I_6& = &{{ \nu_{4,N}}}^{2}
\end{eqnarray*}
}
\end{table}
On  these invariants acts the group $\mathrm{Gal}(\Q(\zeta_{24\cdot 7})/\Q)$   
and we finally arrive at six invariant functions that over $\Q$ generate a family of 
invariant polynomials.
We present in \ref{table1}  just one of them.

\begin{table} 
\label{table1} 
\caption{An invariant comming from generalized Weber functions for $p=7$}
{\tiny
\begin{eqnarray*}
F_1&=&(-4\zeta^{44} + 4\zeta^{36} + 4\zeta^{32} + 4\zeta^{16} - 4\zeta^4 + 48)\nu_{N,0}^2  \\ &&+
 (4\zeta^{46} + 
        12\zeta^{42} - 4\zeta^{38} - 4\zeta^{34} - 4\zeta^{30} + 4\zeta^{26} + 4\zeta^{22} - 4\zeta^{14} + 
        4\zeta^6 + 4\zeta^2)\nu_{N,0}\nu_{0,N} \\ &&
 + (-8\zeta^{45} + 4\zeta^{41} + 8\zeta^{37} + 8\zeta^{33} - 
        8\zeta^{25} - 8\zeta^{21} + 12\zeta^{17} + 8\zeta^{13} - 12\zeta^5 - 8\zeta)\nu_{N,0}\nu_{1,N} \\ &&
 + 
        (-4\zeta^{36} + 16\zeta^{28} - 4\zeta^{16} + 4\zeta^8 + 4\zeta^4)\nu_{N,0}\nu_{2,N} \\ &&
 + (16\zeta^{47} - 
        28\zeta^{35} + 16\zeta^{27} - 16\zeta^{19} + 28\zeta^7 + 16\zeta^3)\nu_{N,0}\nu_{3,N} \\ && 
+ (-8\zeta^{38} - 
        8\zeta^{34} + 8\zeta^{26} + 16\zeta^{14} + 8\zeta^6)\nu_{N,0}\nu_{4,N} \\ && 
 + (12\zeta^{45} - 28\zeta^{37} - 
        12\zeta^{33} + 28\zeta^{25} - 12\zeta^{17} - 12\zeta^{13} + 12\zeta^5 + 28\zeta)\nu_{N,0}\nu_{5,N}\\ &&
 + 
        (-4\zeta^{44} + 4\zeta^{36} + 4\zeta^{32} + 4\zeta^{16} - 4\zeta^4 - 16)\nu_{N,0}\nu_{6,N}\\ &&
 + (4\zeta^{44} -
        4\zeta^{36} - 4\zeta^{32} - 4\zeta^{16} + 4\zeta^4 - 48)\nu_{0,N}^2 \\ &&+
 (-4\zeta^{43} + 16\zeta^{35} - 
        4\zeta^{23} + 4\zeta^{15} + 4\zeta^{11})\nu_{0,N}\nu_{1,N} \\ && 
+ (-4\zeta^{46} - 12\zeta^{42} + 4\zeta^{30} - 
        4\zeta^{22} + 12\zeta^{14} - 4\zeta^2)\nu_{0,N}\nu_{2,N} \\ &&
 + (16\zeta^{45} + 16\zeta^{41} - 16\zeta^{33} + 
        28\zeta^{21} - 16\zeta^{13})\nu_{0,N}\nu_{3,N} \\ &&
 + (-8\zeta^{44} + 8\zeta^{32} + 24\zeta^{28} + 8\zeta^8 - 
        24)\nu_{0,N}\nu_{4,N} \\ &&
+ (-4\zeta^{47} - 4\zeta^{43} + 4\zeta^{35} - 4\zeta^{27} - 4\zeta^{23} + 4\zeta^{19} +
        4\zeta^{15} + 4\zeta^{11} + 12\zeta^7 - 4\zeta^3)\nu_{0,N}\nu_{5,N} \\ &&
 + (16\zeta^{46} - 12\zeta^{42} - 
        16\zeta^{38} - 16\zeta^{34} - 16\zeta^{30} + 16\zeta^{26} + 16\zeta^{22} - 16\zeta^{14} + 
        16\zeta^6 + 16\zeta^2)\nu_{0,N}\nu_{6,N} \\ &&
+ (4\zeta^{46} - 48\zeta^{42} - 4\zeta^{30} + 4\zeta^{22} + 
        48\zeta^{14} + 4\zeta^2)\nu_{1,N}^2\\ &&
 + (-16\zeta^{45} - 16\zeta^{41} + 16\zeta^{33} - 28\zeta^{21} + 
        16\zeta^{13})\nu_{1,N}\nu_{2,N}\\ && 
+ (-4\zeta^{44} + 4\zeta^{32} + 16\zeta^{28} + 4\zeta^8 - 16)\nu_{1,N}\nu_{3,N} \\ && + 
        (8\zeta^{47} + 8\zeta^{43} - 8\zeta^{35} + 8\zeta^{27} + 8\zeta^{23} - 8\zeta^{19} - 8\zeta^{15} - 
        8\zeta^{11} - 16\zeta^7 + 8\zeta^3)\nu_{1,N}\nu_{4,N}\\ &&
 + (-16\zeta^{46} + 12\zeta^{42} + 16\zeta^{38} + 
        16\zeta^{34} + 16\zeta^{30} - 16\zeta^{26} - 16\zeta^{22} + 16\zeta^{14} - 16\zeta^6 - 
        16\zeta^2)\nu_{1,N}\nu_{5,N} \\ &&
+ (-8\zeta^{45} + 4\zeta^{41} + 8\zeta^{37} + 8\zeta^{33} - 8\zeta^{25} - 
        8\zeta^{21} + 12\zeta^{17} + 8\zeta^{13} - 12\zeta^5 - 8\zeta)\nu_{1,N}\nu_{6,N} \\ && + 
(4\zeta^{44} - 
        4\zeta^{32} + 48\zeta^{28} - 4\zeta^8 - 48)\nu_{2,N}^2 \\ &&
+ (4\zeta^{47} + 4\zeta^{43} - 4\zeta^{35} + 
        4\zeta^{27} + 4\zeta^{23} - 4\zeta^{19} - 4\zeta^{15} - 4\zeta^{11} - 12\zeta^7 + 
        4\zeta^3)\nu_{2,N}\nu_{3,N}\\ &&
 + (-8\zeta^{46} - 24\zeta^{42} + 8\zeta^{38} + 8\zeta^{34} + 8\zeta^{30} - 
        8\zeta^{26} - 8\zeta^{22} + 8\zeta^{14} - 8\zeta^6 - 8\zeta^2)\nu_{2,N}\nu_{4,N} \\ &&
 + (8\zeta^{45} - 
        4\zeta^{41} - 8\zeta^{37} - 8\zeta^{33} + 8\zeta^{25} + 8\zeta^{21} - 12\zeta^{17} - 8\zeta^{13}
 + 
        12\zeta^5 + 8\zeta)\nu_{2,N}\nu_{5,N}\\ &&
 + (16\zeta^{36} + 12\zeta^{28} + 16\zeta^{16} - 16\zeta^8 - 
        16\zeta^4)\nu_{2,N}\nu_{6,N}\\ &&
 + (4\zeta^{46} - 48\zeta^{42} - 4\zeta^{38} - 4\zeta^{34} - 4\zeta^{30} + 
        4\zeta^{26} + 4\zeta^{22} - 4\zeta^{14} + 4\zeta^6 + 4\zeta^2)\nu_{3,N}^2 \\ &&
+ (-16\zeta^{45} + 
        8\zeta^{41} + 16\zeta^{37} + 16\zeta^{33} - 16\zeta^{25} - 16\zeta^{21} + 24\zeta^{17} + 
        16\zeta^{13} - 24\zeta^5 - 16\zeta)\nu_{3,N}\nu_{4,N} \\ &&
 + (4\zeta^{36} - 12\zeta^{28} + 4\zeta^{16} - 
        4\zeta^8 - 4\zeta^4)\nu_{3,N}\nu_{5,N} \\ &&
+ (4\zeta^{47} + 8\zeta^{35} + 4\zeta^{27} - 4\zeta^{19} - 
        8\zeta^7 + 4\zeta^3)\nu_{3,N}\nu_{6,N} \\ &&
+ (-12\zeta^{36} + 12\zeta^{28} - 12\zeta^{16} + 12\zeta^8 + 
        12\zeta^4)\nu_{4,N}^2  \\ &&
+ (8\zeta^{47} + 16\zeta^{35} + 8\zeta^{27} - 8\zeta^{19} - 16\zeta^7 + 
        8\zeta^3)\nu_{4,N}\nu_{5,N} \\ && 
+ (-8\zeta^{38} - 8\zeta^{34} + 8\zeta^{26} + 16\zeta^{14} + 8\zeta^6)\nu_{4,N}\nu_{6,N} \\ && 
        + (-4\zeta^{38} - 4\zeta^{34} + 4\zeta^{26} - 52\zeta^{14} + 4\zeta^6)\nu_{5,N}^2 \\ &&
 + (16\zeta^{45} - 
        12\zeta^{37} - 16\zeta^{33} + 12\zeta^{25} - 16\zeta^{17} - 16\zeta^{13} + 16\zeta^5 + 
        12\zeta)\nu_{5,N}\nu_{6,N} \\ &&
+ (-4\zeta^{44} + 4\zeta^{36} + 4\zeta^{32} + 4\zeta^{16} - 4\zeta^4 + 
        48)\nu_{6,N}^2,
\end{eqnarray*}
} 
\end{table}

The corresponding polynomials for each class  invariant are
\[
\begin{array}{ll}
 t^2 + (420-8\sqrt{-91})t-20048, &t^2 + (672+40\sqrt{-91})t-57344,\\
 t^2 + (672+112\sqrt{-91})t-137984,&
t^2 + (1218+30\sqrt{-91})t-171136, \\
t^2 + (630-66\sqrt{-91})t-74592,&
t^2 + (798+54\sqrt{-91})t-91168
\end{array}
\]
Notice that the class polynomials have coefficients in $\mathcal{O}=\Z[\theta]$. Only if 
the value of the class function at $\theta$ is real, then  the class polynomial is in $\Z[t]$. For the 
construction of elliptic curves this is not a problem; we still can take the coefficients
modulo a prime ideal of $\mathcal{O}$ above $p$ and the values are either in 
$\mathbb{F}_p$ or in $\mathbb{F}_{p^2}$. 


%
%
\section{Comparison - conclusions}
How effective are the polynomials constructed by this method compared to other methods?
Let us compute the Hilbert class field of $\Q(\sqrt{-299})=\Q(\sqrt{D})$.
Using our method we arrive at the following invariants
\begin{eqnarray*}
I_1 &=& 12\zeta^{12}\frak{g}_0^2 + (-12\zeta^{12} + 12)\frak{g}_1^2 + 36\frak{g}_2\frak{g}_3,\\
I_2 &=&    36\zeta^{12}\frak{g}_0^2 + 12\frak{g}_2\frak{g}_3,\\
I_3 &=&    
24\zeta^{12}\frak{g}_0^2 + (-12\zeta^{18} + 24\zeta^6)\frak{g}_0\frak{g}_1 + 24\frak{g}_2\frak{g}_3,\\
I_4 &=&    12\zeta^{12}\frak{g}_0^2 + (-12\zeta^{18} + 24\zeta^6)\frak{g}_0\frak{g}_1 + 36\frak{g}_2\frak{g}_3
\end{eqnarray*}
with corresponding minimal polynomials
{\tiny
\begin{eqnarray*}
 P_1 &= & T^8 - 132T^7 - 3600T^6 - 1057536T^5 + 67578624T^4 + 2988223488T^3 + \\ &&
        159765073920T^2 + 5279816908800T + 59659100356608,\\ 
 P_2&= &   T^8 + (-36\sqrt{D} - 240)T^7 + (-1080\sqrt{D} + 37656)T^6 + (163296\sqrt{D} + 6612192)T^5 +\\ &&
        (19346688\sqrt{D} + 50471424)T^4 + (630415872\sqrt{D} - 19422706176)T^3 + \\ &&
        (-5925685248\sqrt{D} - 990861465600)T^2 + (-1731321298944\sqrt{D} + 1227669405696)T - \\ &&
        75541764243456\sqrt{D} - 516837998592,\\
P_3 &= &    T^8 + (-24\sqrt{D} - 612)T^7 + (-864\sqrt{D} + 27504)T^6 + (-82944\sqrt{D} + 4126464)T^5 + \\ &&
        (26002944\sqrt{D} + 3939840)T^4 + (376233984\sqrt{D} - 5667397632)T^3 + \\ &&
        (-14941863936\sqrt{D} - 771342372864)T^2 + (-264582070272\sqrt{D} + 27642125795328)T + \\ &&
        13454127267840\sqrt{D} - 355534235172864,\\
P_4 & =&    T^8 + (-12\sqrt{D} - 516)T^7 + (-504\sqrt{D} - 72)T^6 + (10368\sqrt{D} + 3680640)T^5 + \\ &&
        (20476800\sqrt{D} + 273849984)T^4 + (-430728192\sqrt{D} - 22758423552)T^3 + \\ &&
        (-39195518976\sqrt{D} - 559365875712)T^2 + (-114339299328\sqrt{D} + 36926863884288)T + \\ &&
        55540735672320\sqrt{D} + 99976764063744
\end{eqnarray*}
}
These are smaller that the coefficients of the Hilbert polynomial by a  factor of logarithmic height up to $6$
 but are not as efficient  as  the Ramanujan class invariant corresponding to
\[
 \frak{g}_2 \frak{g}_3=\frac{1}{48} I_2  -\frac{1}{16} I_3+ \frac{1}{16} I_4,
\]
which has a very small  minimal polynomial 
\[
 T^8 + T^7 - T^6 - 12T^5 + 16T^4 - 12T^3 + 15T^2 - 13T + 1.
\]
How can we select the most efficient class invariant?
Notice that every element in the $\Q$-vector space generated by the invariants $I_i$  constructed by our algorithm 
is a class invariant. Also all elements in the $\Z$-module generated by $I_i$ will give rise to class invariants with 
coefficients in $\mathcal{O}$. Of course (as the above example in $\Q(\sqrt{-299})$ indicates)
there might be elements of the form $\sum \lambda_i I_i$ with some $\lambda_i \in \Q-\Z$.
So far it seems a difficult problem how to select the most efficient class function among all 
class function. This problem is equivalent to minimizing the logarithmic height 
function on a lattice and seems out of reach for now. For the case of generalized Weber functions it  seems that 
monomials of the Weber invariants  are the best choices.
However there are cases, for example the $D \equiv 5 \mod 24$ case,  where no monomial invariants exist. 
Our method in this case provides a much better invariants  than the invariants constructed in 
\cite{KoKo3}, as one can see form table \ref{tab3}.

{\bf Acknowledgment:}
We would like to thank V. Metaftsis for his help on expressing matrices in 
$\mathrm{SL}(2,\Z/N\Z)$ in terms of $S,T$. Also, we would like  to thank 
the magma algebra team for providing us with a free extension of the 
license to their system.

\providecommand{\bysame}{\leavevmode\hbox to3em{\hrulefill}\thinspace}
\providecommand{\MR}{\relax\ifhmode\unskip\space\fi MR }
\providecommand{\MRhref}[2]{%
  \href{http://www.ams.org/mathscinet-getitem?mr=#1}{#2}
}
\providecommand{\href}[2]{#2}

\end{document}